\newtheorem{lemma}{Lemma}[section]
\newtheorem{theorem}{Theorem}[section]
\newtheorem{conjecture}{Conjecture}[section]
\newtheorem{definition}{Definition}[section]
\theoremstyle{remark}
\newtheorem{remark}{Remark}[section]
\numberwithin{equation}{section}
\begin{document}
\title{On a conjecture on sparse binomial-type polynomials by Brown, Dilcher and Manna}
\author{Wolfgang Gawronski\thanks{Department of Mathematics, University of Trier, 54286 Trier, Germany. E-mail: gawron@uni-trier.de}, Thorsten Neuschel
  \thanks{Department of Mathematics, KU Leuven, Celestijnenlaan 200B box 2400, BE-3001 Leuven, Belgium. This work is supported by KU Leuven research grant OT\slash12\slash073. E-mail: Thorsten.Neuschel@wis.kuleuven.be}}

\dedication{Dedicated to the memory of Professor F.W.J. Olver}
 
 \date{\today}
%\begin{dedication}Da\end{dedication}
\maketitle

\paragraph{Abstract} We prove a conjecture by Brown, Dilcher and Manna on the asymptotic behavior of sparse binomial-type polynomials arising naturally in a graph theoretical context in connection with the expected number of independent sets of a graph.

\paragraph{Keywords} Asymptotics; sparse polynomials; lacunary polynomials; saddle point method

\paragraph{Mathematics Subject Classification (2010)}   30E15 ; 41A60

\section{Introduction}
 
The sequence of lacunary polynomials given by
\begin{equation}\label{P}f_n(z)=\sum_{k=0}^n \binom{n}{k}z^{\binom{k}{2}}
\end{equation}
was recently studied in a number of papers by the authors Brown, Dilcher and Manna. Originally, this sequence of polynomials arises from the question on the expected number of independent sets of vertices of finite simple graphs, see \cite{BDM3} for more details. Moreover, those polynomials may also be considered to be interesting because of their close connection with the Jacobi Theta functions investigated in \cite{BDM4}. In the paper \cite{BDM2} the authors prove algebraic and analyic properties, and in addition to those, in \cite{BDM1} they obtain upper and lower bounds for the values of these polynomials for \(0<z<1\). Furthermore, in \cite{BDM1} the asymptotic relation
\begin{equation}\label{Log}
\log f_n\left(\tfrac{1}{y}\right)\sim \frac{1}{2\log y}\log^2 n,\quad n\rightarrow \infty,
\end{equation}
has been established. On the basis of their analytic results and numerical computations finally the authors state the

\begin{conjecture}[Brown, Dilcher, Manna \cite{BDM1}] For a fixed real number \(y>1\), as \(n\rightarrow\infty\), we have
\begin{equation}\label{R1}f_n\left(\tfrac{1}{y}\right)\sim \frac{1}{\sqrt{w(n)}}\exp\left(\frac{w(n)^2 +2 w(n)}{2\log y}\right),
\end{equation}
where \(w(n)=W(n\sqrt{y}\log y)\) and \(W\) denotes the LambertW-function.
\end{conjecture}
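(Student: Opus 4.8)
The plan is to set $z=1/y$ (so $0<z<1$) and analyse $f_n(1/y)=\sum_{k=0}^n\binom{n}{k}y^{-\binom{k}{2}}$ by the saddle point method. First I would locate the dominant terms. Writing $a_k=\binom{n}{k}y^{-\binom{k}{2}}$, the ratio $a_{k+1}/a_k=\frac{n-k}{k+1}y^{-k}$ equals $1$ near $k\approx w(n)/\log y$, so the summand rises and then falls with a peak at an index of order $\log n$, in particular $o(n)$. Since only indices $k=O(\log n)$ matter, I would replace $\binom{n}{k}$ by $n^k/k!$ (the relative error $\prod_{j<k}(1-j/n)=\exp(-O((\log n)^2/n))\to 1$ is uniform on the relevant range) and extend the summation to all $k\ge 0$ at the cost of a negligible tail. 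Using $y^{-\binom{k}{2}}=(\sqrt{y})^{k}y^{-k^2/2}$, this reduces the problem to the cleaner sum $S=\sum_{k\ge 0}\frac{(n\sqrt{y})^k}{k!}\,y^{-k^2/2}$.

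The heart of the argument is an integral representation that turns $S$ into a genuine contour integral amenable to the saddle point method. Using the Gaussian identity $y^{-k^2/2}=\frac{1}{\sqrt{2\pi\log y}}\int_{-\infty}^{\infty}\exp\!\left(-\frac{u^2}{2\log y}+iku\right)du$ and summing the exponential series termwise gives
\begin{equation*}
S=\frac{1}{\sqrt{2\pi\log y}}\int_{-\infty}^{\infty}\exp\!\left(-\frac{u^2}{2\log y}+n\sqrt{y}\,e^{iu}\right)du .
\end{equation*}
Denoting the exponent by $H(u)$, the saddle point equation $H'(u)=-\frac{u}{\log y}+in\sqrt{y}\,e^{iu}=0$ has a solution on the imaginary axis $u=iv$ governed by $v e^{v}=n\sqrt{y}\log y$, that is, precisely $u_0=i\,w(n)$ with $w(n)=W(n\sqrt{y}\log y)$. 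This is exactly where the Lambert $W$-function enters.

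It then remains to evaluate the standard saddle point contributions. Using $n\sqrt{y}\,e^{-w(n)}=w(n)/\log y$ from the saddle equation one finds $H(u_0)=\frac{w(n)^2+2w(n)}{2\log y}$, matching the exponent in the conjecture, while $H''(u_0)=-\frac{1+w(n)}{\log y}$. The Gaussian factor $\sqrt{2\pi/(-H''(u_0))}=\sqrt{2\pi\log y/(1+w(n))}$ cancels the prefactor $1/\sqrt{2\pi\log y}$ and yields
\begin{equation*}
S\sim\frac{1}{\sqrt{1+w(n)}}\exp\!\left(\frac{w(n)^2+2w(n)}{2\log y}\right)\sim\frac{1}{\sqrt{w(n)}}\exp\!\left(\frac{w(n)^2+2w(n)}{2\log y}\right),
\end{equation*}
the last equivalence following from $w(n)=W(n\sqrt{y}\log y)\sim\log n\to\infty$; this also explains why the natural saddle point prefactor $1/\sqrt{1+w(n)}$ agrees asymptotically with the conjectured $1/\sqrt{w(n)}$.

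The \emph{main obstacle} is making the saddle point approximation rigorous. Because $n\sqrt{y}\,e^{iu}$ is $2\pi$-periodic in $\mathrm{Re}\,u$, the exponent $H$ has infinitely many further saddles at $\mathrm{Re}\,u\approx 2\pi m$ of the same height $\approx w(n)$, and one must justify deforming the line of integration onto a steepest descent contour through $u_0$ and show that these secondary saddles, suppressed by factors $e^{-2\pi^2 m^2/\log y}$, do not affect the leading asymptotics. Controlling this deformation and the accompanying tail estimates, together with the uniform control of the approximation $\binom{n}{k}\approx n^k/k!$ and of the truncations in both $S$ and the integral, is the delicate part; the computation of the saddle location and of $H(u_0),\,H''(u_0)$ is then routine.
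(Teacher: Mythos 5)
Your setup closely parallels the paper's: the Gaussian identity converting $y^{-k^2/2}$ into an integral, the resulting contour integral with a saddle on the imaginary axis governed by a Lambert-$W$-type equation, and the local computations $H(u_0)=\frac{w^2+2w}{2\log y}$, $H''(u_0)=-\frac{1+w}{\log y}$ are all correct. (The paper keeps the exact factor $(1+\sqrt{y}e^{is})^n$ instead of your approximation $\exp(n\sqrt{y}e^{iu})$, which shifts the saddle equation from $te^t=n\sqrt{y}\log y$ to $t(e^t+\sqrt{y})=n\sqrt{y}\log y$; its Remark 2.1 shows this difference is asymptotically immaterial, so that part of your simplification is defensible.)

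The fatal gap is precisely the step you defer to the end: the claim that the secondary saddles near $\mathrm{Re}\,u=2\pi m$ ``do not affect the leading asymptotics.'' They do. Each such saddle has the same imaginary part and the same Gaussian width as the principal one; its contribution is smaller only by the factor $e^{-2\pi^2 m^2/\log y}$, which is a constant independent of $n$, multiplied by an oscillating phase $e^{-2\pi i m w(n)/\log y}$. Summing over $m$ therefore multiplies the principal contribution by $1+2\sum_{m\ge1}e^{-2\pi^2 m^2/\log y}\cos\left(2\pi m\,r(n)/\log y\right)+o(1)$, a value of Jacobi's theta function $\theta_3$ that oscillates with $n$ and does not converge to $1$. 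This is exactly the content of the paper's Theorem 1.1: the conjectured relation with $\sim$ (ratio tending to $1$) is not literally true, and no choice of contour can remove the theta factor. It escapes numerical detection only because $\vert\rho_n(y)\vert\le 2/(e^{2\pi^2/\log y}-1)$, e.g. at most $10^{-12}$ for $y=2$. Your proposal as written cannot be completed; it must be amended to retain the theta-function correction, which is what the paper does.
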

Here, see (\ref{Log}) and (\ref{R1}), and in the sequel the symbol \(\sim\) means that the quotient of both sides converges to unity as \(n\rightarrow \infty\). Moreover, for any real \(x>0\) the value \(W(x)\) of the LambertW-function is defined as the positive solution \(t\) of the equation \(te^t =x\) (see, e.g., \cite{NIST}, p. 111). Thus, for fixed \(y>1\), the number \(w(n)\) can be defined as the positive solution \(t\) of the equation
\begin{equation}\label{E1}te^t =n\sqrt{y}\log y.
\end{equation}
The main purpose of this note is to confirm the above conjecture by proving a slightly different form of (\ref{R1}) given by the following
\begin{theorem}\label{Main} For a fixed real number \(y>1\), as \(n\rightarrow\infty\), we have 
\begin{align}\label{R2}f_n\left(\tfrac{1}{y}\right)=&\frac{1}{\sqrt{r(n)}}\exp\left(\frac{r(n)^2 +2 r(n)}{2\log y}\right)\\\label{R3}
&\times\left(1+2\sum_{k=1}^{\infty}e^{-2 \pi^2 k^2/{\log y}} \cos(2 \pi k r(n)/{\log y})+o(1)\right),
\end{align}
where \(r(n)\) is defined as the positive solution \(t\) of the equation
\begin{equation}\label{E2}t\left(e^t+\sqrt{y}\right) =n\sqrt{y}\log y.
\end{equation}
\end{theorem}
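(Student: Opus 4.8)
The plan is to treat $f_n(1/y)=\sum_{k=0}^n\binom nk e^{-L k(k-1)/2}$, with $L=\log y$, as a sum $\sum_k e^{g(k)}$ where $g(k)=\log\binom nk-\tfrac L2 k(k-1)$, and to run a saddle-point/Poisson-summation analysis. First I would study $g$ as a function of a continuous variable. Since $\tfrac{d}{dk}\log\binom nk\approx\log\frac{n-k}{k}$, one gets $g'(k)=\log\frac{n-k}{k}-L(k-\tfrac12)$, and setting $g'(k)=0$ yields exactly $\sqrt y\,n=k(e^{Lk}+\sqrt y)$; writing $r=Lk$ this is precisely equation \eqref{E2}. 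Hence the continuous maximum sits at $k^\ast=r(n)/L$, and one checks $r(n)\sim\log n$, so $k^\ast\to\infty$ while $k^\ast\ll n$.

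The decisive structural feature is the curvature: $g''(k)=-\frac1{n-k}-\frac1k-L\le -L$ on $(0,n)$, so $g$ is strictly concave with second derivative bounded away from zero. This has two consequences. On the one hand it gives the global bound $g(k)\le g(k^\ast)-\tfrac L2(k-k^\ast)^2$, so the tails $\sum_{|k-k^\ast|\ge K}e^{g(k)}$ are $O(e^{g(k^\ast)}e^{-LK^2/2})$ and are negligible against the main term. On the other hand, since $|g''|$ does \emph{not} tend to zero, the peak has width $O(1)$: only boundedly many integers contribute, the sum does not smooth into a single Gaussian integral, and the lattice discreteness survives in the limit. This is exactly what produces the Jacobi theta factor in \eqref{R3}.

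Next I would localise on a window $|k-k^\ast|\le K$ with $K=k^{\ast 1/3}\to\infty$ and Taylor expand $g$ about $k^\ast$. Here $g'''(k^\ast)=O(1/k^{\ast2})$, the residual slope $g'(k^\ast)=O(1/k^\ast)$ (coming from the gap between $\psi$ and $\log$, since $r(n)$ only annihilates the leading part of $g'$), and the departure of $g''(k^\ast)$ from $-L$ is $O(1/k^\ast)$; multiplied by the relevant powers of $K$ each contributes $o(1)$ to the exponent over the whole window. Consequently
\begin{equation*}
f_n\!\left(\tfrac1y\right)=e^{g(k^\ast)}\sum_{m\in\mathbb Z}e^{-\frac L2(m-k^\ast)^2}\bigl(1+o(1)\bigr),
\end{equation*}
the sum having been extended to all integers at the cost of the negligible tail. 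Applying Poisson summation to $x\mapsto e^{-\frac L2(x-k^\ast)^2}$ turns this lattice sum into
\begin{equation*}
\sqrt{\tfrac{2\pi}{L}}\,\Bigl(1+2\sum_{\ell=1}^\infty e^{-2\pi^2\ell^2/L}\cos(2\pi\ell k^\ast)\Bigr),
\end{equation*}
and since $k^\ast=r(n)/L$ this is exactly the bracket in \eqref{R3}.

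Finally I would evaluate the prefactor $e^{g(k^\ast)}\sqrt{2\pi/L}$ by Stirling's formula together with the saddle relation $n/k^\ast=1+e^{r}/\sqrt y$. Careful bookkeeping, retaining every term that is not $o(1)$, gives $g(k^\ast)=\frac{r^2+2r}{2L}-\tfrac12\log\!\frac{2\pi r}{L}+o(1)$, whence $e^{g(k^\ast)}\sqrt{2\pi/L}=r^{-1/2}\exp\!\big(\frac{r^2+2r}{2L}\big)(1+o(1))$, matching \eqref{R2}. I expect the main obstacle to be the uniform error control in the localisation step: because the theta bracket is itself only of order $1$ and oscillates with $n$, every correction must be shown to perturb it by $o(1)$, and in particular the small but nonzero residual slope $g'(k^\ast)$ must be controlled, since a lattice sum is sensitive to shifts of its centre. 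Choosing the window exponent so that the cubic term, the $g''+L$ discrepancy, and the Stirling remainders are simultaneously $o(1)$ while the truncated tail stays exponentially small is the crux; the use of $r(n)$ rather than $w(n)$ is precisely what makes the leading slope vanish and keeps this bookkeeping clean (the two choices agree to within $o(1)$ in the final formula).
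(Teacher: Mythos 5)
Your outline is sound and reaches the theorem by a genuinely different route from the paper. The paper first converts the sum into the exact integral representation \eqref{Int} of Lemma \ref{IntRep} (via the Gaussian Fourier identity applied to each coefficient $y^{-k^2/2}$), shifts the contour through the saddle $ir(n)$, and then exploits the $2\pi$-periodicity of $e^{is}$: the integral over $\mathbb{R}$ is cut into the windows $[2\pi k-1,2\pi k+1]$, each window contributes the weight $\exp\{-(2\pi^2k^2+2\pi i k r(n))/\log y\}$ times a common Laplace-type factor $A_k(n)\to 1$, and the theta function \eqref{t1} appears as the sum of these weights, with uniformity supplied by the dominating bound of Lemma \ref{hLem}. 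You instead stay with the discrete sum $\sum_k e^{g(k)}$, locate the same saddle equation \eqref{E2} from $g'(k)=0$, observe that $g''\le -L$ forces an $O(1)$-width peak so that the lattice structure survives, and recover the theta factor by Poisson summation at the end; the prefactor then requires Stirling's formula rather than falling out of $\psi_n(0)$ and $a(n)$ as in the paper. The two arguments are essentially Fourier-dual: the Gaussian identity the paper applies at the outset is the same one that underlies your final Poisson-summation step. Your version is more elementary and makes it transparent \emph{why} a theta function must appear; the paper's version avoids Stirling and the delicate bookkeeping you correctly flag. The points you identify as the crux are indeed the ones needing care: the residual slope $g'(k^\ast)=O(1/k^\ast)$ (present because $k^\ast$ annihilates only the $\log$-approximation of the digamma terms) shifts the centre of the lattice Gaussian by $o(1)$, which is harmless since $\theta_3(\pi c, q)$ is Lipschitz in $c$; and the multiplicative $(1+o(1))$ you obtain converts to the additive $o(1)$ of \eqref{R3} because the Jacobi triple product bounds the theta value away from zero. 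With the window exponent chosen as you indicate, all error terms are simultaneously $o(1)$, so no step fails; what remains is only the routine uniform bookkeeping you have already identified.
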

Using the notation 
\begin{equation}\label{t1}\theta_3 (z,q)=\sum_{k=-\infty}^{\infty}q^{k^2} e^{2 i k z}=1+2\sum_{k=1}^{\infty}q^{k^2} \cos(2kz)
\end{equation}
of Jacobi's third Theta function the asymptotics in Theorem \ref{Main} can be written as 
\begin{equation}\label{t2}f_n\left(\tfrac{1}{y}\right)=\frac{1}{\sqrt{r(n)}}\exp\left(\frac{r(n)^2 +2 r(n)}{2\log y}\right)\left\{\theta_3 \left(\frac{\pi r(n)}{\log y}, e^{-2 \pi^2/{\log y}}\right)+o(1)\right\},
\end{equation}
as \(n\rightarrow \infty\).

Since \(r(n)\) grows like \(\log n\) (see (\ref{r}) below), Theorem \ref{Main} clearly implies the logarithmic asymptotics in (\ref{Log}). The reason for the consideration of equation (\ref{E2}) instead of equation (\ref{E1}) will become clear from the proof. It turns out that equation (\ref{E2}) is somewhat more natural to consider which also is supported by the numerical observations. Actually, the factor in (\ref{R2}) involving \(r(n)\) is asymptotically equivalent to the corresponding quantity in (\ref{R1}) containing \(w(n)\). Also the series in (\ref{R3}) is ``very small'' compared with the constant term \(1\) in the curly brackets (see the remarks in section 2 below). The proof of Theorem \ref{Main} relies on the asymptotic evaluation of an integral representation for \(f_n\left(1/y\right)\) (Lemma \ref{IntRep}) using a non standard version of the method of saddle points. Thereby we also think that we deal with a problem in the spirit of Frank Olver who was known for his preference of problems of this kind \cite{Olver}. As a supplement to these analytic facts, in section 3 we prove monotonicity properties for the sequence \(f_n\left(1/y\right)\).

\section{Proof of the main result}
In the first instance, we establish an integral representation for the polynomials given in (\ref{P}).
\begin{lemma} \label{IntRep}For any real number \(y>1\) we have
\begin{equation}\label{Int}f_n\left(\tfrac{1}{y}\right)=\frac{1}{\sqrt{2\pi \log y}}\int\limits_{-\infty}^{\infty}\exp\left\{-\frac{s^2}{2\log y}\right\}\left(1+\sqrt{y}e^{is}\right)^n ds.
\end{equation}
\end{lemma}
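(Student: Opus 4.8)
The plan is to verify \eqref{Int} by evaluating its right-hand side explicitly and recognizing the defining sum of $f_n(1/y)$ in \eqref{P}. The one computational ingredient is the elementary Gaussian (Fourier) integral, whose absolute convergence is guaranteed precisely because $y>1$ forces $\log y>0$: for every integer $k$,
\[
\frac{1}{\sqrt{2\pi\log y}}\int_{-\infty}^{\infty}\exp\left\{-\frac{s^2}{2\log y}\right\}e^{iks}\,ds = y^{-k^2/2}.
\]
I would obtain this from the standard formula $\int_{-\infty}^{\infty}e^{-\alpha s^2+\beta s}\,ds=\sqrt{\pi/\alpha}\,e^{\beta^2/(4\alpha)}$ (valid for $\operatorname{Re}\alpha>0$) by completing the square with $\alpha=1/(2\log y)$ and $\beta=ik$, which produces $\sqrt{2\pi\log y}\,e^{-k^2\log y/2}$ and hence the displayed value after dividing by the prefactor.

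First I would expand the integrand by the binomial theorem, writing $(1+\sqrt{y}\,e^{is})^n=\sum_{k=0}^{n}\binom{n}{k}y^{k/2}e^{iks}$. Because this is a \emph{finite} sum, the interchange of summation and integration is immediate and needs no additional justification. Applying the Gaussian identity above term by term then gives
\[
\frac{1}{\sqrt{2\pi\log y}}\int_{-\infty}^{\infty}\exp\left\{-\frac{s^2}{2\log y}\right\}\left(1+\sqrt{y}\,e^{is}\right)^n ds=\sum_{k=0}^{n}\binom{n}{k}\,y^{k/2}\,y^{-k^2/2}.
\]

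Finally I would simplify the exponents, observing that $y^{k/2}y^{-k^2/2}=y^{-(k^2-k)/2}=y^{-\binom{k}{2}}$, so the right-hand side reduces to $\sum_{k=0}^{n}\binom{n}{k}y^{-\binom{k}{2}}=f_n(1/y)$ by \eqref{P}, which completes the proof. There is no genuine obstacle here: the only steps deserving a moment's attention are the evaluation of the Gaussian integral (equivalently, the fact that the Fourier transform of a Gaussian is again a Gaussian) and the exact matching of the exponent $\tfrac{k}{2}-\tfrac{k^2}{2}$ with $-\binom{k}{2}$. This last point is exactly what dictates the placement of the factor $\sqrt{y}$ inside the $n$-th power and the choice of variance $\log y$ in the Gaussian, making the representation a natural starting point for the saddle point analysis to follow.
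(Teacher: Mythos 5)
Your proof is correct and is essentially the paper's own argument run in the opposite direction: the paper starts from the sum defining $f_n(1/y)$, inserts the Gaussian integral representation of $y^{-k^2/2}$, and applies the binomial theorem, whereas you expand the integrand binomially and integrate term by term — the same two ingredients (the Gaussian Fourier integral and the identity $y^{k/2}y^{-k^2/2}=y^{-\binom{k}{2}}$) in reverse order. No gaps; the finiteness of the sum indeed makes the interchange trivial.
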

\begin{proof} We start from the following well-known result from Fourier analysis on Gaussians
\[\int\limits_{-\infty}^{\infty} \exp\left\{-\alpha s^2+2\pi i k s\right\}ds =\sqrt{\frac{\pi}{\alpha}}\exp\left\{-\frac{\pi^2 k^2}{\alpha}\right\},\quad \alpha >0,~ k\in\mathbb{Z}.\]
By a simple change of variables we obtain
\[\exp\left\{-\frac{k^2}{\alpha}\right\}=\sqrt{\frac{\alpha}{4\pi}}\int\limits_{-\infty}^{\infty} \exp\left\{-\frac{\alpha}{4} s^2+i k s\right\}ds,\quad \alpha >0,~ k\in\mathbb{Z}.\]
Using this identity for \(\alpha=2/\log y\), \(y>1\), we arrive at
\begin{equation}\label{1}\left(\frac{1}{\sqrt{y}}\right)^{k^2}=\frac{1}{\sqrt{2\pi \log y}}\int\limits_{-\infty}^{\infty} \exp\left\{-\frac{s^2}{2\log y}+i k s\right\}ds.
\end{equation}
Moreover, we observe 
\[f_n\left(\tfrac{1}{y}\right)=\sum_{k=0}^{n}\binom{n}{k}\left(\frac{1}{\sqrt{y}}\right)^{k^2} y^{k/2}.\]
Hence, using (\ref{1}), after an interchange of summation and integration and applying the binomial theorem we obtain
\[f_n\left(\tfrac{1}{y}\right)=\frac{1}{\sqrt{2\pi \log y}}\int\limits_{-\infty}^{\infty}\exp\left\{-\frac{s^2}{2\log y}\right\}\left(1+\sqrt{y}e^{is}\right)^n ds.\]
\end{proof}

Now the idea for the proof of Theorem \ref{Main} consists of an asymptotic evaluation of the integral representation (\ref{Int}) in Lemma \ref{IntRep} by following a general version of the method of saddle points as described in \cite[Ch. 5]{Bruijn}. To begin with, we present some properties of the integrand in (\ref{Int}) by studying it for saddle points. Therefore, we fix \(y>1\) and define 
\[\varphi(s)=\exp\left\{-\frac{s^2}{2\log y}\right\}\left(1+\sqrt{y}e^{is}\right)^n.\]
Computing its derivative with respect to \(s\) gives
\[\varphi'(s)=\varphi(s) \left(\frac{n i \sqrt{y} e^{is}}{1+\sqrt{y}e^{is}}-\frac{s}{\log y}\right).\]
Now, the saddle points are the zeros of the function (see \cite{Bruijn}, p. 83)
\[\frac{n i \sqrt{y} e^{is}}{1+\sqrt{y}e^{is}}-\frac{s}{\log y}\]
or, equivalently, the points \(s=it\) where \(t\) is a solution of (\ref{E2}). Obviously, there is an infinite number of such solutions \(t_k\), \(k\in\mathbb{Z}\), say, where in the first instance we only choose the number \(t_0 =r(n)\) being the unique positive solution of equation (\ref{E2}). The subsequent asymptotic analysis shows that it is appropriate to use the points \(r(n)+2\pi k i\), \(k\in\mathbb{Z}\), rather than the unknown numbers \(t_k\), if \(k\neq 0\).
Clearly, the sequence \(r(n)\) is strictly increasing and unbounded in \(n\). Furthermore, we have
\begin{equation}\label{r}r(n)\sim \log n\quad \text{and}\quad e^{r(n)}\sim\frac{n\sqrt{y}\log y}{\log n}, \quad n\rightarrow\infty.
\end{equation}
In fact, these observations can be obtained by similar aguments to those usually used to study the LambertW-function \cite{NIST}. By a standard application of Cauchy's theorem we can shift the path of integration in (\ref{Int}) from the real axis to a parallel straight line through the saddle point located on the imaginary axis at \(i r(n)\). This way, for (\ref{Int}) we obtain the representation
\begin{equation}\label{I1} f_n\left(\tfrac{1}{y}\right)=\frac{1}{\sqrt{2\pi \log y}}\int\limits_{-\infty}^{\infty} e^{\psi_n(s)} ds,
\end{equation}
where we define
\begin{equation}\label{I2}e^{\psi_n(s)}=\exp\left\{\frac{-1}{2\log y}\left(s^2+2 i r(n) s -r(n)^2\right)+n\log\left(1+\sqrt{y}e^{-r(n)}e^{is}\right)\right\}.
\end{equation}
Here and throughout the paper, \(\log z\) denotes that branch of the logarithm which is real for positive \(z\).

By construction we have \(\psi_n'(0)=0\), that is for (\ref{I1}) a saddle point is located at \(s=0\) and the absolute value of the integrand is given by
\[\vert e^{\psi_n(s)}\vert=\exp\left\{\frac{-1}{2\log y}\left(s^2 -r(n)^2\right)\right\}\left\vert1+\sqrt{y}e^{-r(n)}e^{is}\right\vert^n,\]
so that we immediately infer that there is a unique global maximum located at the saddle point. In the sequel it turns out that, as \(n\rightarrow \infty\), asymptotic contributions for the integral in (\ref{I1}) not only are given in the neighborhood of the origin, but also in the vicinity of the points \(s=2k\pi, k\in \mathbb{Z}\). To facilitate the underlying analysis for large \(n\) we consider the Taylor expansion of \(\psi_n(s)\) at the origin which we may write as 
\begin{equation}\label{Taylor}\psi_n(s)=\psi_n (0)-a(n)s^2+\sum_{\nu=3}^{\infty} b_{\nu}(n)s^{\nu}.
\end{equation}
The following explicit expressions are readily verified. Therefore we omit the straightforward computations.
\begin{lemma}\label{TaylorLem} Suppose that \(y>1\) is fixed and that \(n\) is such that \(\sqrt{y}e^{-r(n)}<1\) (see (\ref{r})). Then we have
\begin{itemize}\item[i)]\begin{equation}\label{TaylorLem1}\psi_n(0)=\frac{r(n)^2}{2\log y} +n\log \left(1+\sqrt{y}e^{-r(n)}\right),\end{equation}
\item[ii)]\begin{equation}\label{TaylorLem2}a(n)=-\frac{\psi''_n(0)}{2} =\frac{1}{2\log y}+\frac{n\sqrt{y}e^{-r(n)}}{2(1+\sqrt{y}e^{-r(n)})^2},\end{equation}
\item[iii)]\begin{equation}\label{TaylorLem3}b_{\nu}(n)=\frac{\psi_n^{(\nu)}(0)}{\nu !}=\frac{-n i^{\nu}}{\nu!}\sum_{k=1}^{\infty}k^{\nu-1}\left(-\sqrt{y}e^{-r(n)}\right)^{k},~ \nu\geq 3, \end{equation}
\item[iv)]\begin{equation}\label{TaylorLem4} e^{\psi_n(s)-\psi_n(0)}=\exp\left\{-\frac{s^2+2 i r(n) s}{2\log y}\right\}\left(\frac{1+\sqrt{y}e^{-r(n)}e^{is}}{1+\sqrt{y}e^{-r(n)}}\right)^n.\end{equation}
\end{itemize}
\end{lemma}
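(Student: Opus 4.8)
The plan is to differentiate the explicit expression
\[
\psi_n(s) = \frac{-1}{2\log y}\bigl(s^2 + 2ir(n)s - r(n)^2\bigr) + n\log\bigl(1 + \sqrt{y}\,e^{-r(n)}e^{is}\bigr)
\]
term by term and simply read off the Taylor coefficients. Throughout it is convenient to abbreviate \(c := \sqrt{y}\,e^{-r(n)}\), which satisfies \(0 < c < 1\) by the standing hypothesis. Part i) is then immediate: setting \(s=0\) annihilates the linear and quadratic terms and leaves \(\frac{r(n)^2}{2\log y} + n\log(1+c)\).

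For part ii) I would first compute
\[
\psi_n'(s) = -\frac{s + ir(n)}{\log y} + n\,\frac{ic\,e^{is}}{1 + c\,e^{is}},
\]
so that \(\psi_n'(0) = -\frac{ir(n)}{\log y} + n\,\frac{ic}{1+c}\). This vanishes precisely because \(r(n)\) solves (\ref{E2}); indeed, rearranging the equation \(\psi_n'(0)=0\) and clearing the exponential reproduces (\ref{E2}) exactly, which is the computation that motivated the definition of \(r(n)\) and confirms the promised saddle point at the origin. Differentiating once more, the only step needing a moment's care is \(\frac{d}{ds}\frac{ic\,e^{is}}{1+c\,e^{is}} = -\frac{c\,e^{is}}{(1+c\,e^{is})^2}\); hence \(\psi_n''(0) = -\frac{1}{\log y} - n\,\frac{c}{(1+c)^2}\), and \(a(n) = -\psi_n''(0)/2\) takes the asserted form.

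For the higher coefficients in part iii) the clean route is to expand the logarithm as a power series in \(e^{is}\). Since \(|c\,e^{is}| = c < 1\) for real \(s\), we may write \(\log(1 + c\,e^{is}) = \sum_{k\geq 1}\frac{(-1)^{k-1}}{k}c^k e^{iks}\), and this series together with all of its termwise derivatives converges uniformly on compact subsets of the real axis, so differentiation under the summation sign is legitimate. As the quadratic prefactor contributes nothing once \(\nu\geq 3\), we obtain
\[
\psi_n^{(\nu)}(0) = n\sum_{k\geq 1}\frac{(-1)^{k-1}}{k}c^k (ik)^\nu = n\,i^\nu\sum_{k\geq 1}(-1)^{k-1}k^{\nu-1}c^k;
\]
rewriting \((-1)^{k-1} = -(-1)^k\) and dividing by \(\nu!\) yields exactly (\ref{TaylorLem3}). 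Finally, part iv) follows by subtracting (\ref{TaylorLem1}) from \(\psi_n(s)\): the two copies of \(r(n)^2/(2\log y)\) cancel, the two logarithmic terms combine into the single quotient \(\log\frac{1+c\,e^{is}}{1+c}\), and exponentiating gives the stated product. I anticipate no genuine obstacle here; the lemma is essentially bookkeeping, and the sole point deserving explicit mention is the interchange of differentiation and summation, which the hypothesis \(c<1\) guarantees.
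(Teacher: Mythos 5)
Your proof is correct and is precisely the ``straightforward computation'' that the paper explicitly omits (the authors state only that the expressions are readily verified). The direct differentiation of \(\psi_n\) from (\ref{I2}), the verification that \(\psi_n'(0)=0\) reduces to (\ref{E2}), and the termwise differentiation of \(\log(1+ce^{is})=\sum_{k\ge1}\frac{(-1)^{k-1}}{k}c^k e^{iks}\) (justified by \(c<1\)) all check out, so nothing further is needed.
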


Here, as a matter of interest we note that the coefficients \(b_{\nu}(n)\) are connected with the Euler-Frobenius polynomials, \(P_{\nu}\) say, by the formula
\[b_{\nu}(n)=\frac{-ni^{\nu}}{\nu !}\frac{P_{\nu-1}\left(-\sqrt{y} e^{-r(n)}\right)}{\left(1+\sqrt{y}e^{-r(n)}\right)^{\nu}},\]
where the polynomials \(P_{\nu}\) can be defined by the relation (see, e.g., \cite{Comtet}, p. 245)
\[\sum_{l = 0}^{\infty} ~ l^{\nu} z^{l} = \frac{P_{\nu} (z)}{(1 - z)^{\nu+1}} ~~, ~~ \nu \in \mathbb{N}\cup\{0\}.\]

Next, for technical reasons below we consider the function 
\begin{equation}\label{h} h_n(s)=\exp\left\{-s^2 +\sum_{\nu= 3}^{\infty} b_{\nu}(n)\left(\frac{s}{\sqrt{a(n)}}\right)^{\nu}\right\}\mathbbm{1}_{[-\sqrt{a(n)},\sqrt{a(n)}]}(s),
\end{equation}
where for a set \(S\) its indicator function is denoted by \(\mathbbm{1}_S\) as customary.

\begin{lemma}\label{hLem} Suppose that \(y>1\) is fixed.
\begin{itemize}\item[i)] Then there exists an integer \(n_0(y)\), depending on \(y\) only, such that 
\begin{equation}\label{hLem1}\vert h_n(s)\vert \leq e^{-(1-\cos 1)s^2 +1}\end{equation}
for \(n\geq n_0(y)\) and all \(s\in\mathbb{R}\).
\item[ii)] For any \(s \in \mathbb{R}\) we have
\begin{equation}\label{hLem2}\lim_{n\rightarrow \infty} h_n(s)=e^{-s^2}.\end{equation}
\end{itemize} 
\end{lemma}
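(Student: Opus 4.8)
The plan is to bypass the Taylor data $b_\nu(n)$ entirely and work with the closed form of the rescaled integrand. Setting $\sigma=s/\sqrt{a(n)}$ and $q_n=\sqrt{y}e^{-r(n)}$, I note that by (\ref{Taylor}) the exponent in the definition (\ref{h}) is precisely $\psi_n(\sigma)-\psi_n(0)=-a(n)\sigma^2+\sum_{\nu\ge3}b_\nu(n)\sigma^\nu$, so that Lemma \ref{TaylorLem} iv) supplies the workable representation
\[
h_n(s)=\exp\left\{-\frac{\sigma^2+2ir(n)\sigma}{2\log y}\right\}\left(\frac{1+q_ne^{i\sigma}}{1+q_n}\right)^{\!n}\mathbbm{1}_{[-\sqrt{a(n)},\sqrt{a(n)}]}(s).
\]
Two facts will be used repeatedly. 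First, the defining equation (\ref{E2}), after division by $e^{r(n)}$, is equivalent to the saddle identity $nq_n/(1+q_n)=r(n)/\log y$. Second, (\ref{r}) gives $q_n\to0$, while (\ref{TaylorLem2}) rewrites as $a(n)=\tfrac{1}{2\log y}\bigl(1+\tfrac{r(n)}{1+q_n}\bigr)\to\infty$, whence the crucial ratio $r(n)/(2a(n)\log y)\to1$.

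For part i) I would pass to moduli. The summand $ir(n)\sigma/\log y$ is purely imaginary and contributes modulus one, while a direct computation gives $|1+q_ne^{i\sigma}|^2/(1+q_n)^2=1-2q_n(1-\cos\sigma)/(1+q_n)^2$. Using $\log(1-x)\le-x$ and the identity $nq_n/(1+q_n)^2=2\bigl(a(n)-\tfrac1{2\log y}\bigr)$ coming from (\ref{TaylorLem2}), on the support $|s|\le\sqrt{a(n)}$ I obtain
\[
|h_n(s)|\le\exp\left\{-\frac{\sigma^2}{2\log y}-2\Bigl(a(n)-\tfrac1{2\log y}\Bigr)(1-\cos\sigma)\right\}.
\]
Since $|\sigma|\le1$ there, the elementary inequality $1-\cos\sigma\ge(1-\cos1)\sigma^2$ together with $\sigma^2=s^2/a(n)$ turns the exponent, after discarding the nonpositive first term, into $-2(1-\cos1)\bigl(1-\tfrac1{2a(n)\log y}\bigr)s^2$. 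Choosing $n_0(y)$ so large that $a(n)\log y\ge1$ for $n\ge n_0(y)$ makes the coefficient at least $1-\cos1$, giving even the slightly stronger estimate $|h_n(s)|\le e^{-(1-\cos1)s^2}$; off the support $h_n$ vanishes, so (\ref{hLem1}) holds for all real $s$.

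For part ii) I fix $s$; since $a(n)\to\infty$ the indicator equals $1$ for all large $n$ and $\sigma\to0$. Writing $L_n(s)=\log h_n(s)$ and $x_n=q_n(e^{i\sigma}-1)/(1+q_n)$, the saddle identity gives $nx_n=\tfrac{r(n)}{\log y}(e^{i\sigma}-1)$, whose imaginary linear part cancels the phase $-ir(n)\sigma/\log y$ exactly. Expanding $e^{i\sigma}-1=i\sigma-\tfrac{\sigma^2}{2}+\rho(\sigma)$ with $|\rho(\sigma)|\le|\sigma|^3/6$, and using $|\log(1+x)-x|\le|x|^2$ for $|x|\le\tfrac12$, I am led to
\[
L_n(s)=-\frac{\sigma^2}{2\log y}-\frac{r(n)}{2\log y}\sigma^2+\frac{r(n)}{\log y}\rho(\sigma)+E_n,\qquad |E_n|\le n|x_n|^2.
\]
Here the first term is $-s^2/(2a(n)\log y)\to0$; the second equals $-\tfrac{r(n)}{2a(n)\log y}\,s^2\to-s^2$ by the ratio recorded above; and the last two are $O\bigl(r(n)/a(n)^{3/2}\bigr)$ and $O(q_n)$ respectively, hence vanish. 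Thus $L_n(s)\to-s^2$ and $h_n(s)\to e^{-s^2}$, which is (\ref{hLem2}).

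The step I expect to be the main obstacle is the second one above: confirming that the rescaled quadratic coefficient $r(n)/(2a(n)\log y)$ tends to \emph{exactly} $1$ while simultaneously all higher-order contributions — both the cubic-and-beyond part of $e^{i\sigma}-1$ and the quadratic-and-beyond part of the logarithm — remain genuinely negligible. This is precisely where the exact form of the saddle equation (\ref{E2}) and the sharp asymptotics $r(n)\sim\log n$, $q_n\to0$ from (\ref{r}) must be used in concert; any cruder bookkeeping would either corrupt the constant in the limit or leave an uncontrolled remainder.
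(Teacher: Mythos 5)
Your proof is correct, but it takes a genuinely different route from the paper's. The paper works directly with the Taylor coefficients \(b_{\nu}(n)\) of (\ref{TaylorLem3}): for part i) it computes \(\Re\sum_{\nu\ge3}b_{\nu}(n)(s/\sqrt{a(n)})^{\nu}\) by resumming the double series into \(n\sqrt{y}e^{-r(n)}\bigl(\cos(s/\sqrt{a(n)})-1+s^2/(2a(n))\bigr)\) plus an error \(c_n(s)=\mathcal{O}(\log^2 n/n)\) (whence the extra \(+1\) in the exponent of (\ref{hLem1})), and for part ii) it simply bounds the entire tail \(\sum_{\nu\ge3}|b_{\nu}(n)|\,|s/\sqrt{a(n)}|^{\nu}=\mathcal{O}(1/\sqrt{\log n})\). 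You instead exploit the closed form of Lemma \ref{TaylorLem} iv) together with the saddle identity \(nq_n/(1+q_n)=r(n)/\log y\) extracted from (\ref{E2}), where \(q_n=\sqrt{y}e^{-r(n)}\). Both arguments ultimately rest on the same two facts, namely the inequality \(1-\cos x\ge(1-\cos 1)x^2\) on \([-1,1]\) and the limit \(n\sqrt{y}e^{-r(n)}/(2a(n))\to 1\), but your modulus computation in part i) avoids the series manipulation entirely and even yields the slightly sharper bound \(|h_n(s)|\le e^{-(1-\cos 1)s^2}\), with no additive constant. Conversely, your part ii) is heavier than the paper's one-line tail estimate, since you must verify the exact cancellation of the linear phase and control the \(\mathcal{O}\bigl(r(n)/a(n)^{3/2}\bigr)\) and \(\mathcal{O}(q_n)\) remainders separately; all of these steps check out. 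The one point worth stating explicitly is that identifying \(h_n(s)\) with \(e^{\psi_n(s/\sqrt{a(n)})-\psi_n(0)}\) on \([-\sqrt{a(n)},\sqrt{a(n)}]\) requires \(\sqrt{y}e^{-r(n)}<1\) so that the Taylor series in (\ref{h}) converges there and sums to the function; this is the same standing hypothesis the paper imposes (and itself uses in bounding \(|A_k(n)|\)), and it is harmless for \(n\ge n_0(y)\).
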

\begin{proof} We may suppose that \(n\) is such that \(\sqrt{y}e^{-r(n)}<1\) and \(s\in [-\sqrt{a(n)}, \sqrt{a(n)}]\). Also from (\ref{r}) and (\ref{TaylorLem2}) we deduce 
\begin{equation}\label{rr} ne^{-r(n)}\sim \frac{\log n}{\sqrt{y} \log y}, \quad a(n)\sim\frac{\log n}{2\log y},\quad n\rightarrow \infty.\end{equation}
\begin{itemize}\item[i)] In view of (\ref{h}) and using (\ref{TaylorLem3}) we conclude 
\begin{align*}\Re \sum_{\nu= 3}^{\infty} &b_{\nu}(n)\left(\frac{s}{\sqrt{a(n)}}\right)^{\nu}=\sum_{\nu= 2}^{\infty} b_{2\nu}(n)\left(\frac{s}{\sqrt{a(n)}}\right)^{2\nu}\\
&=\sum_{\nu= 2}^{\infty} \frac{(-1)^{\nu} n\sqrt{y} e^{-r(n)}}{(2\nu)!}\left(\sum_{k=1}^{\infty} k^{2\nu -1} \left(-\sqrt{y}e^{-r(n)}\right)^{k-1}\right) \left(\frac{s}{\sqrt{a(n)}}\right)^{2\nu}\\
&=n\sqrt{y} e^{-r(n)}\sum_{k=1}^{\infty} \left(-\sqrt{y}e^{-r(n)}\right)^{k-1}\frac{1}{k} \left(\cos\frac{ks}{\sqrt{a(n)}} -1+\frac{k^2 s^2}{2 a(n)}\right)\\
&=n\sqrt{y} e^{-r(n)}\left(\cos\frac{s}{\sqrt{a(n)}} -1+\frac{s^2}{2 a(n)}\right)+c_n (s),
\end{align*}
where we have
\[\vert c_n (s)\vert \leq n\left(\sqrt{y}e^{-r(n)}\right)^2 \sum_{k=2}^{\infty}\left(\sqrt{y}e^{-r(n)}\right)^{k-2}\frac{1}{k}\left(2+\frac{k^2}{2}\right)=\mathcal{O}\left(\frac{\log^2 n}{n}\right),\]
as \(n\rightarrow \infty\) (see (\ref{rr})). Here, the \(\mathcal{O}\)-constant depends on \(y\) only. 
Further, since the function \((\cos x-1)/x^2\) is increasing on the interval \([0,1]\), we obtain
\[n\sqrt{y} e^{-r(n)}\left(\cos\frac{s}{\sqrt{a(n)}} -1+\frac{s^2}{2 a(n)}\right)\leq \frac{n\sqrt{y} e^{-r(n)}}{2a(n)}s^2+\frac{n\sqrt{y} e^{-r(n)}}{a(n)}s^2(\cos 1-1).\]
Putting \(d_n=n\sqrt{y} e^{-r(n)}/2a(n)\), by (\ref{rr}), we have \(d_n \rightarrow 1\), as \(n\rightarrow \infty\). Thus, choosing the positive number \(\epsilon = (1-\cos 1)/(3-2 \cos 1)\), there exists an integer \(n_1 (y)\), depending on \(y\) only, such that \(\vert d_n -1\vert<\epsilon\) for \(n\geq n_1 (y)\). Now, collecting these estimates, for some integer \(n_0 (y)\) we get
\begin{align*}\vert h_n (s)\vert&\leq \exp\left(-s^2 +d_n s^2 +2 d_n s^2 (\cos 1 -1)+1\right)\\
&\leq \exp\left(-s^2 + (1+\epsilon) s^2 +2 (1-\epsilon) s^2 (\cos 1 -1)+1\right)\\
&\leq \exp\left(- (1-\cos 1)s^2 +1\right),
\end{align*}
for \(n\geq n_0 (y)\) and all \(s\in \mathbb{R}\).
\item[ii)] Using (\ref{TaylorLem3}), (\ref{h}), (\ref{rr}) and keeping \(s\) fixed we get the estimate 
\begin{align*}\sum_{\nu=3}^{\infty}\vert b_{\nu}(n)\vert \left(\frac{\vert s\vert}{\sqrt{a(n)}}\right)^{\nu}&\leq\frac{n\sqrt{y}e^{-r(n)}}{a(n)^{3/2}}\sum_{\nu =3}^{\infty}\frac{1}{\nu!}\frac{\vert s \vert^{\nu}}{a(n)^{(\nu-3)/2}}\sum_{k=1}^{\infty}k^{\nu-1}\left(\sqrt{y}e^{-r(n)}\right)^{k-1}\\
&=\mathcal{O}\left(\frac{ne^{-r(n)}}{a(n)^{3/2}}\right)=\mathcal{O}\left(\frac{1}{\sqrt{\log n}}\right)
\end{align*}
from which the pointwise convergence in (\ref{hLem2}) follows.
\end{itemize} 
\end{proof}

Now we are prepared for the 

\begin{proof}[Proof of Theorem \ref{Main}] We start from the representation (\ref{I1}) and split the integral regarding the periodicity of the exponential \(e^{is}\) (observe the comments preceding formula (\ref{r}) and (\ref{Taylor}) above). Thus, we have
\begin{align}\nonumber&\sqrt{2a(n)\log y} \,e^{-\psi_n (0)} f_n\left(\tfrac{1}{y}\right)\\\nonumber
&=\sqrt{\frac{a(n)}{\pi}}\sum_{k=-\infty}^{\infty}\int\limits_{~2\pi k -1}^{2\pi k +1} e^{\psi_n (s)-\psi_n (0)} ds +\sqrt{\frac{a(n)}{\pi}}\sum_{k=-\infty}^{\infty}\int\limits_{2\pi k +1}^{2\pi (k+1) -1} e^{\psi_n (s)-\psi_n (0)} ds\\
& \label{HR}=H_n + R_n, \quad \text{say}. 
\end{align}
Next, we want to recognize \(R_n\) to be a null sequence. To this end, we apply (\ref{TaylorLem4}) to obtain the estimate
\begin{align*}\vert R_n \vert &\leq \sqrt{\frac{a(n)}{\pi}} \sum_{k=-\infty}^{\infty}\int\limits_{~2\pi k +1}^{2\pi (k +1)-1}\exp\left(\frac{-s^2}{2\log y}\right)\left\vert \frac{1+\sqrt{y}e^{-r(n)} e^{is}}{1+\sqrt{y}e^{-r(n)}}\right\vert^n ds\\
&\leq \sqrt{\frac{a(n)}{\pi}} \left(\frac{1+2\sqrt{y}e^{-r(n)} \cos 1 + y e^{-2r(n)}}{\left(1+\sqrt{y}e^{-r(n)}\right)^2}\right)^{n/2} \int\limits_{-\infty}^{\infty}e^{-s^2/2\log y}ds\\
&=\sqrt{a(n) 2 \log y}\exp\left(n \sqrt{y}e^{-r(n)} (\cos 1-1)+\mathcal{O}\left(\frac{\log^2 n}{n}\right)\right),
\end{align*}
as \(n \rightarrow \infty\). From (\ref{rr}) we get \(n\sqrt{y}e^{-r(n)}> \log n /2\log y\), if \(n\) is sufficiently large, and thus 
\begin{equation}\label{R} R_n = \mathcal{O}\left(\frac{\sqrt{\log n}}{n^{(1-\cos 1)/2\log y}}\right),
\end{equation}
as \(n \rightarrow \infty\).

To treat \(H_n\) we proceed as follows (see (\ref{TaylorLem4})):
\begin{align}\nonumber H_n &=\sqrt{\frac{a(n)}{\pi}}\sum_{k=-\infty}^{\infty}\int\limits_{-1}^{1} e^{\psi_n (t+2 \pi k)-\psi_n (0)} dt\\\nonumber
&=\sqrt{\frac{a(n)}{\pi}}\sum_{k=-\infty}^{\infty}\int\limits_{-1}^{1} \exp\left\{-\frac{(t+2\pi k)^2+2 i r(n) (t+2\pi k)}{2 \log y}\right\} \left(\frac{1+\sqrt{y}e^{-r(n)} e^{it}}{1+\sqrt{y}e^{-r(n)}}\right)^n dt\\\nonumber
&=\sum_{k=-\infty}^{\infty} \exp\left\{-\frac{2\pi^2 k^2 +2 \pi i k r(n)}{\log y}\right\}\\\nonumber
& \quad \quad \times \sqrt{\frac{a(n)}{\pi}} \int\limits_{-1}^{1} \exp\left\{-\frac{t^2+4\pi k t +2 i r(n)t}{2\log y}\right\}\left(\frac{1+\sqrt{y}e^{-r(n)} e^{it}}{1+\sqrt{y}e^{-r(n)}}\right)^n dt\\\nonumber
&=\sum_{k=-\infty}^{\infty} \exp\left\{-\frac{2\pi^2 k^2 +2 \pi i k r(n)}{\log y}\right\}\sqrt{\frac{a(n)}{\pi}} \int\limits_{-1}^{1} e^{-2\pi k t /\log y} \, e^{\psi_n(t)-\psi_n(0)}dt.
\end{align}
Now introducing
\[A_k (n)=\sqrt{\frac{a(n)}{\pi}} \int\limits_{-1}^{1} e^{-2\pi k t /\log y} \, e^{\psi_n(t)-\psi_n(0)}dt\]
we obtain (see (\ref{t1}))
\begin{align}\nonumber H_n &=\sum_{k=-\infty}^{\infty} \exp\left\{-\frac{2\pi^2 k^2 +2 \pi i k r(n)}{\log y}\right\}A_k (n)\\
\label{t3}&=\theta_3\left(\frac{\pi r(n)}{\log y}, e^{-2\pi^2/\log y}\right)+R_n^{\ast}
\end{align}
with 
\begin{equation}\label{t4}R_n^{\ast} =\sum_{k=-\infty}^{\infty} \exp\left\{-\frac{2\pi^2 k^2 +2 \pi i k r(n)}{\log y}\right\}\left(A_k (n)-1\right).
\end{equation}
In view of (\ref{HR})--(\ref{t4}) it is sufficient to prove that \(R_n^{\ast}\) tends to zero as \(n\rightarrow \infty\). To this end, in (\ref{t4}) we justify the interchange of summation and the limit as \(n\rightarrow \infty\). This indeed is possible because of the estimate (see (\ref{Taylor}), (\ref{TaylorLem4}), (\ref{h}))
\begin{align*}\vert A_k (n)\vert &\leq  e^{2 \pi\vert k\vert/\log y} \frac{1}{\sqrt{\pi}}\int\limits_{-\sqrt{a(n)}}^{\sqrt{a(n)}}e^{\Re\left(\psi_n\left(\frac{s}{\sqrt{a(n)}}\right)-\psi_n (0)\right)}ds\\
&=e^{2 \pi\vert k\vert/\log y} \frac{1}{\sqrt{\pi}}\int\limits_{-\infty}^{\infty} \vert h_n (s) \vert ds\\
&\leq e^{2 \pi\vert k\vert/\log y} \frac{e}{\sqrt{\pi}}\int\limits_{-\infty}^{\infty}e^{-(1-\cos 1)s^2}ds
\end{align*}
and Lebesgue's dominated convergence theorem combined with Lemma \ref{hLem}, i). Finally, we show that 
\begin{equation}\label{A} \lim_{n\rightarrow \infty} A_k(n)=1
\end{equation}
for any integer \(k \in \mathbb{Z}\). We have
\begin{align*}A_k(n)&=\sqrt{\frac{a(n)}{\pi}} \int\limits_{-1}^{1} e^{-2\pi k t /\log y} \, e^{\psi_n(t)-\psi_n(0)}dt\\
&=\frac{1}{\sqrt{\pi}} \int\limits_{-\sqrt{a(n)}}^{\sqrt{a(n)}}e^{-2\pi k s /\sqrt{a(n)}\log y} \, e^{\psi_n\left(s/\sqrt{a(n)}\right)-\psi_n(0)}ds\\
&=\frac{1}{\sqrt{\pi}} \int\limits_{-\infty}^{\infty}e^{-2\pi k s /\sqrt{a(n)}\log y}\, h_n (s)ds.
\end{align*}
Another application of Lebesgue's theorem (observe (\ref{hLem1})) in conjuction with (\ref{hLem2}) in Lemma \ref{hLem}, ii) implies (\ref{A}). Summarizing, we have proved 
\begin{equation}\label{S}f_n\left(\tfrac{1}{y}\right)=\frac{e^{\psi_n(0)}}{\sqrt{2a(n)\log y}}\left\{\theta_3\left(\frac{\pi r(n)}{\log y}, e^{-2\pi^2/\log y}\right)+o(1)\right\}.
\end{equation}
Finally, using (\ref{E2}), (\ref{r}), (\ref{TaylorLem1}) and (\ref{TaylorLem2}) we observe
\[2a(n)\log y =1+\frac{r(n)}{1+\sqrt{y}e^{-r(n)}}\sim  r(n)\]
and
\begin{align*}\psi_n(0)-\frac{r(n)^2+2r(n)}{2\log y}&= n \log\left(1+\sqrt{y}e^{-r(n)}\right)-\frac{r(n)}{\log y}\\
&=n\left(\log\left(1+\sqrt{y}e^{-r(n)}\right)-\frac{\sqrt{y}e^{-r(n)}}{1+\sqrt{y}e^{-r(n)}}\right)\\
&=\mathcal{O}\left(\frac{\log^2 n}{n}\right),
\end{align*}
as \(n \rightarrow \infty\).
Hence, in view of (\ref{S}), we arrive at (\ref{t2}) and the proof of Theorem \ref{Main} is complete.
\end{proof}

We conclude this section by comparing the asymptotic statements in Theorem \ref{Main} with the conjecture (\ref{R1}).

\begin{remark}\label{Re1} We briefly show that the factor in (\ref{R2}) is asymptotically equivalent to the quantity on the right-hand side of (\ref{R1}). This will follow from the relations
\begin{equation}\label{CR1}w(n)\sim r(n),\quad n\rightarrow \infty,
\end{equation}
\begin{equation}\label{CR2}w(n)-r(n)=o(1),\quad n\rightarrow \infty,
\end{equation}
\begin{equation}\label{CR3}w(n)^2-r(n)^2=o(1),\quad n\rightarrow \infty.
\end{equation}
Clearly, from (\ref{E1}) we have \(w(n)\sim \log n\) and thus (\ref{CR1}) results from (\ref{r}). Moreover, (\ref{E1}) and (\ref{r}) imply that 
\[e^{w(n)-r(n)}=\frac{n e^{-r(n)}\sqrt{y}\log y}{w(n)} \rightarrow 1,\quad n\rightarrow \infty,\]
and hence (\ref{CR2}) is established. Next, subtracting equation (\ref{E2}) from (\ref{E1}) gives 
\[w(n)e^{w(n)}-r(n)e^{r(n)}=r(n)\sqrt{y},\]
or equivalently
\[w(n)\left(e^{w(n)-r(n)}-1\right)=e^{-r(n)}r(n)\sqrt{y}- (w(n)-r(n)).\]
Hence, the fact that the right-hand side tends to zero as \(n\rightarrow \infty\) (use (\ref{r}) and (\ref{CR2})) implies that
\[w(n)(w(n)-r(n))\rightarrow 0,\quad n\rightarrow \infty.\]
Furthermore, we observe
\[0\leq w(n)^2-r(n)^2 =(w(n)-r(n))(w(n)+r(n))\leq 2 w(n)(w(n)-r(n))\]
from which (\ref{CR3}) follows.
\end{remark}
\begin{remark} According to the comments given in Remark \ref{Re1} the only significant difference between the approximations (\ref{R1}) and (\ref{t2}) consists in the presence of the Fourier series 
\[\rho_n(y)=2\sum_{k=1}^{\infty} e^{-2\pi^2 k^2/\log y} \cos\left(2\pi k r(n)/\log y\right)\]
in (\ref{R3}), which obviously does not tend to zero, as \(n \rightarrow \infty\). However, the trivial estimate 
\[\vert \rho_n(y)\vert \leq \frac{2}{e^{2\pi^2/\log y}-1}\]
explains why numerical calculations do not suggest that (\ref{R1}) could not be true. For instance
\[\vert \rho_n(2)\vert \leq 10^{-12}\]
holds.
\end{remark}

\section{Monotonicity properties}

In this final section we add an elementary property of the sequence \(\left(f_n\left(\tfrac{1}{y}\right)\right)_0^{\infty}\) (see \cite{Feller}, Ch. VII, 1-3).

\begin{definition} A sequence of real numbers \((a_n)_0^{\infty}\) is called absolutely monotonic, if for any \(r, n \in \mathbb{N}\cup\{0\}\)
\[\Delta^r a_n \geq 0,\]
where \(\Delta a_n =a_{n+1}-a_n\) and \(\Delta^{0} a_n= a_n\), \(\Delta^{r+1}=\Delta \Delta^r\).
\end{definition}
In particular, absolutely monotonic sequences are increasing (\(r=1\)) and convex (\(r=2\)).

\begin{theorem} If \(y>1\), then the sequence \(\left(f_n\left(\tfrac{1}{y}\right)\right)_0^{\infty}\) is absolutely monotonic. 
\end{theorem}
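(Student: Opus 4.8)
The plan is to reduce absolute monotonicity to an explicit, manifestly nonnegative closed form for the iterated forward difference \(\Delta^r f_n(1/y)\). Concretely, I would introduce the family
\[S_n^{(r)}\left(\tfrac{1}{y}\right)=\sum_{k=0}^{n}\binom{n}{k}\left(\tfrac{1}{y}\right)^{\binom{k+r}{2}},\qquad r,n\ge 0,\]
so that \(S_n^{(0)}=f_n(1/y)\) by (\ref{P}), and prove the closed form \(\Delta^r f_n(1/y)=S_n^{(r)}(1/y)\) for every \(r\). Once this is established the theorem is immediate: for \(y>1\) we have \(1/y>0\), each binomial coefficient is nonnegative and each power \((1/y)^{\binom{k+r}{2}}\) is strictly positive, so every summand is nonnegative and hence \(\Delta^r f_n(1/y)\ge 0\).

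The engine of the argument is the single recursion \(\Delta_n S_n^{(r)}=S_n^{(r+1)}\), where \(\Delta_n\) denotes the forward difference in the index \(n\). I would prove this by applying Pascal's rule \(\binom{n+1}{k}=\binom{n}{k}+\binom{n}{k-1}\) to \(S_{n+1}^{(r)}\) and shifting the summation index by one in the second sum; the shift turns the exponent \(\binom{k+r}{2}\) into \(\binom{(j+1)+r}{2}=\binom{j+(r+1)}{2}\), which is exactly the exponent appearing in \(S_n^{(r+1)}\). Thus \(S_{n+1}^{(r)}-S_n^{(r)}=S_n^{(r+1)}\). A straightforward induction on \(r\), starting from \(S_n^{(0)}=f_n(1/y)\), then yields \(\Delta^r f_n(1/y)=S_n^{(r)}(1/y)\) for all \(r\ge 0\), completing the proof.

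As an independent check (and an equally short alternative), I would note that the same closed form drops out of the integral representation (\ref{Int}) in Lemma \ref{IntRep}. Since \(n\) enters only through \((1+\sqrt{y}e^{is})^n\) and the forward difference of a geometric sequence \(c^n\) satisfies \(\Delta^r(c^n)=c^n(c-1)^r\), here with \(c-1=\sqrt{y}e^{is}\), one gets
\[\Delta^r f_n\left(\tfrac{1}{y}\right)=\frac{1}{\sqrt{2\pi\log y}}\int_{-\infty}^{\infty}\exp\left\{-\frac{s^2}{2\log y}\right\}\left(1+\sqrt{y}e^{is}\right)^n\left(\sqrt{y}e^{is}\right)^{r}ds.\]
Expanding by the binomial theorem and evaluating each integral with (\ref{1}) (which gives \(y^{-(k+r)^2/2}\) for the term of frequency \(k+r\)) recovers \(S_n^{(r)}(1/y)\) after the one-line simplification \((\sqrt{y})^{k+r}y^{-(k+r)^2/2}=y^{-\binom{k+r}{2}}\).

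I do not expect a genuine obstacle: the statement is elementary once the closed form is in hand. The only points needing a moment's care are the index shift in the Pascal computation (checking that the boundary terms \(\binom{n}{-1}=\binom{n}{n+1}=0\) are handled correctly) and, in the integral alternative, the exponent arithmetic \((\sqrt{y})^{k+r}y^{-(k+r)^2/2}=y^{-\binom{k+r}{2}}\).
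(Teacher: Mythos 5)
Your proposal is correct, and your primary argument is genuinely different from the paper's. The paper proves the theorem by differencing the integral representation of Lemma \ref{IntRep}: since $n$ enters the integrand only through $\left(1+\sqrt{y}e^{is}\right)^n$, one gets $\Delta^r f_n\left(\tfrac{1}{y}\right)$ as the same integral with the extra factor $\left(\sqrt{y}e^{is}\right)^r$, and then formula (\ref{1}) evaluates it to $\sum_{k=0}^{n}\binom{n}{k}y^{-\binom{k+r}{2}}>0$ --- this is precisely your ``independent check,'' down to the exponent arithmetic $(\sqrt{y})^{k+r}y^{-(k+r)^2/2}=y^{-\binom{k+r}{2}}$. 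Your main route, by contrast, is purely combinatorial: the one-step identity $\Delta S_n^{(r)}=S_n^{(r+1)}$ via Pascal's rule and an index shift, followed by induction on $r$. I checked the computation; the boundary terms $\binom{n}{n+1}=0$ and $\binom{n}{-1}=0$ do drop out as you say, and the shift $\binom{(j+1)+r}{2}=\binom{j+(r+1)}{2}$ is exactly what makes the recursion close. What your route buys is independence from the analytic machinery: it works verbatim for $f_n(q)$ with any $q\in(0,1]$ (indeed for any $q\geq 0$) without needing the Gaussian integral representation, which requires $y>1$ for convergence. What the paper's route buys is brevity given that Lemma \ref{IntRep} is already in place, plus the structural observation that differencing in $n$ acts on the integrand as multiplication by $\sqrt{y}e^{is}$. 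Both yield the same manifestly nonnegative closed form, so the theorem follows either way.
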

\begin{proof} From Lemma \ref{IntRep} and formula (\ref{1}) we get
\begin{align*} \Delta^r f_n\left(\tfrac{1}{y}\right) &=\frac{1}{\sqrt{2\pi \log y}}\int\limits_{-\infty}^{\infty}\exp\left\{-\frac{s^2}{2 \log y}\right\} \left(1+\sqrt{y} e^{is}\right)^n \left(\sqrt{y}e^{is}\right)^r ds\\
&=\sum_{k=0}^{n}\binom{n}{k}\frac{1}{y^{\binom{k+r}{2}}},
\end{align*}
which is positive.
\end{proof}

\section*{Acknowledgement} The authors would like to express their gratitude to the anonymous referees who pointed out an error in an earlier draft of the paper. This finally led to the present version.

\end{document}